  \newtheorem{propo}{Proposition} \newtheorem{coro}{Corollary}
\definecolor{red1}{rgb}{1,0.9,0.9} \definecolor{blue1}{rgb}{0.9,0.9,1} \definecolor{green1}{rgb}{0.9,1,0.9}
\def\conjecture#1{ \vspace{2mm} \begin{center} \fcolorbox{green1}{green1}{ \parbox{13.2cm}{{\bf Question:} #1}} \vspace{2mm} \end{center} }
\let\paragraph\subsection
\title{Integral geometric Hopf conjectures}
\author{Oliver Knill}
\date{January 5, 2020}
\address{Department of Mathematics \\ Harvard University \\ Cambridge, MA, 02138 }
\subjclass{05C10, 57M15, 53Axx}
\keywords{Positive curvature, Euler characteristic, Hopf conjectures}
\begin{document}
\maketitle

\begin{abstract}
The Hopf sign conjecture states that a compact Riemannian 2d-manifold $M$ of positive curvature has
Euler characteristic $\chi(M)>0$ and that in the case of negative curvature
$\chi(M) (-1)^d >0$. The Hopf product conjecture asks whether a positive curvature metric
can exist on product manifolds like $S^2 \times S^2$. By formulating curvature integral geometrically, 
these questions can be explored for finite simple graphs, 
where it leads to linear programming problems. In this more expository document we aim to explore also
a bit of the history of the Hopf conjecture and mention some strategies of attacks which have been tried. 
We illustrate the new integral theoretic $\mu$ curvature
concept by proving that for every positive curvature manifold $M$ there exists
a $\mu$-curvature $K$ satisfying Gauss-Bonnet-Chern $\chi(M)=\int_M K \; dV$ such that $K$ 
is positive on an open set $U$ of volume arbitrary close to the volume of $M$. 
\end{abstract}

\section{Introduction}

\paragraph{}
The {\bf Hopf sign conjecture} from the 1930ies is an important
open problem in the field of global Riemannian geometry, where local quantities like curvature are linked to global quantities
like Euler characteristic. It is motivated first of all from the case $d=1$ where it follows from Gauss-Bonnet.
The sign is explained that products $M=M_1 \times \cdots \times M_d$ of negatively curved 2-manifolds 
which leads to a non-positively curved $M$ with $\chi(M) (-1)^d >0$. A first line of attack came through
Gauss-Bonnet-Chern and Milnor's remark that the Gauss-Bonnet-Chern theorem works for 4-manifolds \cite{Chern1966}.
The full {\bf algebraic Hopf conjecture} trying to prove positive Gauss-Bonnet-Chern integrand from positive curvature
was explored more in \cite{Weinstein71} but counter 
examples due of Geroch in 1976 \cite{Geroch,Klembeck} indicated that the Gauss-Bonnet-Chern integrand $K$ 
can become negative even in the positive curvature case. 

\paragraph{}
An other line of attack came by exploring examples of positive or non-negative curvature manifolds aiming to 
enlarge the class of cases supporting the conjecture and also to search for cases, where it might fail. 
All simply connected large dimensional 2d-manifolds are known to be spheres and projective spaces suggesting
even a much more daring sphere theorem claiming that all large dimensional simply connected positive curvature
$2d$-manifolds are spheres or projective spaces. As for the product conjecture,
all the known obstructions to in the simply connected case already are present for non-negative curvature. This
suggests that positive curvature can be deformed from non-negative curvature.
Surveys are given in \cite{Ziller,Ziller2,EscherZiller} and \cite{Eberlein} for non-positive curvature.
An other approach to the conjectures is to assume that $M$ has more symmetry like homogeneous spaces. 
Other symmetry programs were started by Grove in the 90ies \cite{Grove2009}. It has led to many 
results like \cite{PuettmanSearle,Kennard2013}. 

\paragraph{}
An other line of attack is to find bounds on Betti numbers $b_k(M)$. An early result by Gromov in the non-positive curvature
case is that there is a global bound on the Betti numbers $b_k$ which only depends on dimension. 
To get positive $\chi(M)$, one only needed to establish zero
Betti numbers $b_k=0$ for odd $k$ a fact which by Morse inequalities $b_k \leq c_k$ could be done 
by estimating the number $c_k$ of critical points of a cleverly chosen Morse function with Morse index $m(x)=k$. 
An other approach to exclude cohomology is Bochner's method using Hodge theory, 
analyzing harmonic $k$-forms and excluding them by excluding maxima of curvatures of 
such form as mentioned in \cite{Chern1966}.
The conjecture was studied also on subclasses like manifolds with strongly positive 
curvature. The analogue question there is called the ``weak Hopf question" \cite{BettiolMendes}, which 
however turns also to be inaccessible by the algebraic Gauss-Bonnet integrand. 

\paragraph{}
One can hope to construct a Morse function $f$ with only positive
Poincar\'e-Hopf indices $i_f(x) = (-1)^{m(x)}$ and get positive Euler characteristic
also without Morse inequalities and Euler-Poincar\'e $\chi(G)=\sum_k (-1)^k b_k$. 
This rather simple approach amounts to exclude critical points with odd Morse index $m(x)$, 
if $M$ is a positive curvature $2d$ manifold. This reduces to a local problem:
if $x$ is a critical point of a Morse function with Morse index $m(x)=k$, then by the Morse lemma, 
the $(k-1)$ manifold $R_f(x)=S_r(x) \cap W_f^-(x)$ with $k$-dimensional
stable manifold $W^-(x)$ is the product of two spheres $S^{k-1} \times S^{2d-1-k}$. For $k=1$ for example, 
$R_f(x)$ is then disconnected which can be excluded in the orientable positive curvature case. 
Having $c_1=0$ and so $b_1=0$, one so gets a weak Synge result. For suitable Morse functions 
coming from embeddings into larger dimensional Euclidean spaces,
one can do better as as can be illustrated in the case of a hypersurface $M \subset E$, where linear 
functions in the ambient space $E$ produce good Morse functions on $M$. 

\paragraph{}
Rather than insisting on finding a specific Morse function $f$ without odd Morse indices
and so get positive Poincar\'e-Hopf indices $i_f(x)$,
one can try to average the indices over a probability space of Morse functions. 
The example of the projective plane for which every
Morse function also has a negative Poincar\'e-Hopf index ($\chi(M)=1$ and $i_{f}({\rm max})=i_f({\rm min})=1$
implies there is an other $x$ with $i_f(x)=-1$), shows that we need in general more than one
Morse function. So, maybe, some probability space $(\Omega,\mathcal{A},\mu)$
of Morse functions produces indices $i_f(x)$ which as expectation $K(x)=E[i_f(x)]$ produces
a {\bf curvature} $K$ which is positive everywhere? If $M$ is Nash embedded into an Euclidean space $E$
and the probability space of linear functions $f_v(x) = v \cdot x$ with $|v|=1$ is taken with the natural
volume measure of the sphere in $E$ and so on $\Omega$ then this leads to the Gauss-Bonnet-Chern integrand. 
As Geroch's example shows, this in general does not produce $K(x)>0$ but index expectation produces 
more possibilities. 

\paragraph{}
In some sense, Chern's 1944 approach to the Gauss-Bonnet-Chern theorem was of an integral geometric nature already
using Poicar\'e-Hopf. Chern's proof is sketched for example in \cite{KobayashiNomizu}:
the Euler curvature $K$ is lifted to a curvature $p^*K$ on the unit sphere bundle $p:N \to M$, where it is
$d\pi$ for a $(2d-1)$-form $\pi$
on $N$ such that $\int_S \pi =1$ for every fiber $S$. If $X$ is a vector field on $M$ with finitely many singularities $x_k$
of index $i(x_k)$, it defines a submanifold $U$ in $N$ with boundary $\delta U = i(x_k) S_k$. Now, using Stokes theorem one has
$\chi(M)=\sum_k i(x_k) = \sum_k i(x_k) \int_{S_k} \pi = \int_{\delta U} \pi = \int_U d\pi = \int_{S(M)} p^*(K) = \int_M K$. 
Such acrobatics is not necessary in the discrete, where the curvature $K$ can just be obtained by integrating over
all possible Poincar\'e-Hopf indices $i_f(x)$, where $f$ runs over all possible coloring of the graph. In some sense, 
the curvature is collected and concentrated around critical points $x_k$ of a vector field, where it produces a
divisor, an integer valued point measure. In the discrete this works much easier: the curvature has been obtained
by distributing the energy $\omega(x) = (-1)^{{\rm dim}(x)}$ of a simplex $x$ to vertices. By distributing 
$\omega(x)$ instead to an assigned vertex $v$ contained in $x$ (we call this a vector field in \cite{MorePoincareHopf}), 
we get a Poincar\'e-Hopf divisor. The idea is conceptually close to Chern but in the continuum, we have no 
individual simplex elements to play with and must refer to sheaf theoretical constructs like differential forms. 

\paragraph{}
When curvature is reformulated integral geometrically it 
leads to curvatures which can be used both for Riemannian manifolds $M$ as 
well as for finite simple graphs $G$. The sectional curvatures is then defined 
as index expectation of the Morse functions $f$ restricted to $2$-dimensional geodesic sub-spaces. 
The tricky thing  in the discrete is to prescribe exactly what we mean with ``geodesic wheel subspaces"
in the graph theory case. 
To define curvature we also need a measure $\mu$ on Morse functions $f$ on a manifold $M$ or the set of locally injective functions 
on a graph $G$. Each such measure produces not only curvature $E[i_f(x)]$ generalizing the 
Gauss-Bonnet-Chern integrand, it also defines sectional curvatures by looking at index expectation
on geodesic discs $\exp_x(D)$ of a Riemannian manifold $M$ or on ``geodesic wheel sub-graphs"' 
of $G$. This allows to express integral geometrically what ``positive curvature" means. 

\paragraph{}
The Hopf sign conjecture claiming that an even-dimensional 
positive curvature manifold has positive Euler characteristic or that the Cartesian product 
$S^2 \times S^2$ of two $2$-spheres $S^2$ carries a positive curvature metric can now be explored
in graph theory for even-dimensional $d$-graphs. The set-up also revives the algebraic Hopf 
conjecture statement. Does positive sectional curvatures for some measure $\mu$ 
on Morse functions or colorings imply that for some other measure $\nu$, the $\nu$-curvature $K_{\nu}$ 
is positive everywhere on $M$? A positive answer would imply the Hopf conjecture 
because of Gauss-Bonnet $\chi(G)=\int_M K_{\nu} dV$. 
For classical Riemannian manifolds, the approach produces slightly stronger conjectures 
with more flexible statements claiming that every positive $\mu$-curvature manifold has positive Euler 
characteristic and that there is no measure $\mu$ on Morse functions such that $S^2 \times S^2$ has positive 
sectional $\mu$-curvature. 

\paragraph{}
Before ending the introduction, we should point out that there is no lack of proposals for 
notions of curvature in the discrete and especially in graph theory. Beside the combinatorial
curvatures going back to the 19th century (probably first by Eberhard \cite{Eberhard1891} 
which lead to the Levitt curvature \cite{Levitt1992,cherngaussbonnet} which satisfies
the full Gauss-Bonnet relation (where graphs are considered
equipped with the Whitney simplicial complex structure)
There are various proposals for Ricci curvatures
like \cite{Forman2003}. Then there are notions which depend on probability \cite{Ollivier,JostLiu}. 
A recent new approach is \cite{SaucanSamalJost}. What appears to be new with the integral 
geometric proposal is that also the theorems should parallel the results in the continuum. 
For us, the Gauss-Bonnet-Chern result is crucial. 
We illustrate this here in the concept of the Hopf conjecture but one could look at 
from the point of view related to other topics like sphere theorems.

\section{In a nutshell}

\paragraph{}
The following integral theoretic notion of curvature works both for Riemannian manifolds as well as for 
finite simple graphs: if $\Omega$ is the set of Morse functions on a compact differentiable manifold $M$
or the set of locally injective functions on the vertex set of a finite simple
graph $G$, we can equip $\Omega$ with a $\sigma$-algebra and probability measure $\mu$ 
and define the {\bf sectional $\mu$-curvature}
$K_H(x)$ of a sub-graph $H$ of $G$ or a sub-manifold $H$ as the Poincar\'e-Hopf index expectation 
$K_H(x) = {\rm E}_H[i_f(x)]$ with respect to the induced measure $\mu_H$ 
on colorings respectively Morse functions on $H$. The Poincar\'e-Hopf formula 
$\chi(H) = \sum_{x} i_f(x)$ for $f \in \Omega$ leads then to the
Gauss-Bonnet property $\chi(H) = \int_H K(x) dV(x)$ or $\chi(H) = \sum_{x \in H} K(x)$ in the discrete.
We have looked already at the question to realize constant index expectation in 
\cite{ConstantExpectationCurvature}.

\paragraph{}
In the manifold case, the sectional curvature $K_H(x)$ is defined as the Gauss curvature 
of $H=\exp_x(D)$ at $x$ if $D \subset T_xM$ is a $2$-dimensional disk in the tangent space $T_xM$ 
and $\exp_x$ is the exponential map. In the case of $d$-graphs, a sectional curvature 
of $G$ at $x$ is defined now as the curvature of the center $x$ in an embedded 
wheel graph $H$ which is geodesic. An embedded wheel graph $H$ with center $x$ is called 
geodesic, if there are two adjacent points $a,b$ on the boundary of $H$, 
such that for any $c$ on $H$, the geodesic triangle $abc$ has maximal length among all $c \in S(x)$. 
These {\bf geodesic wheel sub-graphs} do not need to be unique for a given triangle $x,a,b$.
In the graph but play the role of the geodesic disk 
$H=\exp_x(D)$ in the continuum. A compact manifold $M$ or a finite simple graph $G$ can now be declared to have 
{\bf positive $\mu$-curvature}, if all sectional curvatures are positive for the measure $\mu$. 

\paragraph{}
In dimension $2$, all wheel sub-graphs are also geodesic and any 
triangulation of a positive curvature 2-manifold has positive curvature in the above sense.
Imposing a definite $\mu$-curvature sign on {\bf all wheel graphs} would be 
a very strong condition in dimensions 3 and higher. 
It would exclude negative curvature of dimension 3 or higher because intersections of spheres 
are 2-spheres and contain wheel graph which by Gauss-Bonnet have positive curvature. We therefore
need to restrict the class of wheel graphs which play the role of geodesic discs $H=\exp_x(D)$.
In the continuum, the notion is natural because if $\mu$ is the natural volume measure 
on the linear functions on an ambient space of a Nash embedding of a Riemannian
manifold $M$, it leads to the classical sectional curvature for $M$. 

\paragraph{}
The Hopf conjectures \cite{Hopf1932,Hopf1946,Hopf1953,BishopGoldberg,BergerPanorama}
can now be studied also for $2d$-graphs. The first adaptation of the classical Hopf conjecture to the discrete is:
{\bf does positive curvature for some $\mu$ on $\Omega(G)$ 
imply positive Euler characteristic $\chi(G)$?} Also: {\bf does negative curvature for a $2d$-graph 
lead to an Euler characteristic with the sign of $(-1)^d$?}
If we would ask positive curvature for the uniform measure of all colorings on all wheel graphs,
this would imply that wheel graphs have $5$ or $6$ vertices.
The graph $G$ then necessarily is a $2d$-sphere \cite{SimpleSphereTheorem}. 
By restricting the wheel graphs to geodesic wheel graphs, the positive curvature condition 
becomes less strong.

\paragraph{}
The second Hopf question {\bf whether there a positive curvature metric on $S^2 \times S^2$} 
can be ported to the discrete too. If we implement $S^2 \times S^2$ as a finite simple $4$-graph, 
we can ask whether there exists a measure $\mu$ on locally injective functions such that
we have a positive $\mu$ curvature for all geodesic wheel graphs. 
This is a {\bf linear programming problem}. It can be asked for example for a 
concrete graph with $26^2$ vertices, which is the product of the octahedron graph $O$ with 
itself \cite{KnillKuenneth} and where the Barycentric refinement of $O$ has 26 vertices.

\paragraph{}
\cite{BergerPanorama} displays the Hopf conjecture as one of 4 pillar problems of Hopf guiding
him in his early research.  1. {\bf the Hopf sign conjecture} 
states that the sign of the Euler characteristic $\chi(M)$ of a $2d$ manifold is $(-1)^d$ in 
the negative curvature case and positive in the positive curvature case, 
2. {\bf The product enigma} asking whether the product manifold $S^{2d} \times S^{2d}$ admits 
a positive curvature metric for positive $d$. In the book \cite{GromollKlingenbergMeyer}
the question is raised, whether in general for any two compact manifolds $M,N$ the product
$M \times N$ never allows for an everywhere positive sectional curvature. There are some remarks
on \cite{Bourguignon75} and that under some symmetry, there is no positive curvature metric
\cite{AmannKennard}. 
3. {\bf The Gauss-Bonnet problem} aimed to generalize Gauss-Bonnet to 2d-manifolds. (It was solved by Hopf
himself in the hypersurface case, then solved first by 
Allendoerfer, Fenchel, then by Allendoerfer, Weil for manifolds embedded in an ambient space and finally by 
Chern intrinsically \cite{HopfCurvaturaIntegra,Allendoerfer,Fenchel,AllendoerferWeil,Chern44,Chern1990}).
(Allendoerfer and Fenchel would with Nash get the full result too and \cite{AllendoerferWeil} made use 
already of a local embedding.  4. The {\bf pinching problem} asked which pinching conditions
do force $M$ to be a sphere? This was solved by \cite{Rauch51}, 
Berger-Klingenberg \cite{BergerPanorama,GromollKlingenbergMeyer}
and finally \cite{BrendleSchoen} in the smooth category.

\paragraph{}
It is a bit vexing how little of the original work is usually cited when mentioning the Hopf 
conjecture. Usually, the book \cite{BergerPanorama} is referred to which cites papers of Hopf
from the 1920ies. \cite{BishopGoldberg} which states the conjecture on the first page does not
mention Hopf at all. We tried a bit harder.
To our knowledge, the Hopf conjecture appears first stated explicitly in a talk given by Hopf in 
1931 \cite{Hopf1932}. Hopf has worked on topics related to that already in the 20ies and might have 
been guided by the questions earlier on but it appears not visible in the writings of the 20ies, 
to our knowledge even so the work on generalizing Gauss-Bonnet might have been motivated by it.
The question is also mentioned in a Festschrift to Brouwers 60'st birthday and 
was written early in 1941 \cite{Hopf1946}. 
A statement in a 1953 talk given in Italy mentions the relation with sign. 
Interestingly, the explicit sign conjecture is stated as a question in \cite{BishopGoldberg} 
without referring to any work of Hopf. Also Chern has proven earlier some result towards it, 
\cite{Chern1966} (page 169),
suggesting that he not only has been aware of the sign conjecture but also worked on it.

\paragraph{}
A strong link between the continuum and discrete has always been integral geometry.
Crofton formulas allow to recover measures like distances using integrals.
For literature on integral geometry, see \cite{Santalo,Santalo1,KlainRota,Schneider1}, in geometry
\cite{Banchoff67,Banchoff1970}. Integral geometry has been used as a glue between the 
discrete and continuum before an intermediate step being piecewise flat spaces 
\cite{CheegerMuellerSchrader} which was of interest also to mathematical physics due to 
Regge calculus \cite{Regge}. 
For Morse theory, see \cite{Mil63,NicoalescuMorse}.
We have used some integral geometry related to curvature in graph theory:
\cite{indexexpectation,indexformula, KnillBaltimore,colorcurvature,AmazingWorld}. 

\section{Sectional curvature} 

\paragraph{}
The definition of sectional curvature in Riemannian manifold $M$ uses the exponential map
$\exp$, a concept that is based on the existence of a geodesic flow. Important is to restrict the sprays 
to two-dimensional planes in the tangent plane $T_xM$ as the curvature tensor in the continuum can be obtained
from these sectional curvatures. The question appears now, 
how to port this to the continuum, where we have no linear structure and already for points $y$ in distance $2$
away from a given point $x$, more than one geodesic exists connecting $x$ with $y$. The intersection
$S(x) \cap S(y)$ is in general a $(d-1)$ simplex which for $d \geq 2$ has more than $1$ point.
One can look at all embedded wheel graphs centered at $x$ representing the Grassmannian $Gr(2,T_xM)$,
but that would be much too strong. 

\paragraph{}
We have seen in \cite{SimpleSphereTheorem} that a naive positive curvature assumption is quite restrictive.
If we ask for positive curvature that every wheel graph has 4 or 5 boundary points and so to
curvature $1-4/6 = 1/3$ and $1-5/6=1/6$, positive 
curvature graphs using that notion leads directly to spheres, the reason being (and this is similar to the
Berger-Klingenberg sphere theorem), because the graph is then the union of two balls. 
It can surprise a bit that no projective planes are allowed. The reason why no projective
plane is possible is because there is not enough space to implement this. In some sense, the strong curvature condition 
$K=1/3$ or $K=1/6$ produces already a pinching condition. For negative curvature graphs, the 
notion of sectional curvature can not exist for the simple reason that for any 
$d$-graph for $d \geq 3$ there are always positive curvature wheel graphs embedded. By Gauss-Bonnet, they are
always present in unit spheres of points of d-graphs. 

\paragraph{}
In order to define positive sectional curvature more realistically in the discrete, we have to 
select a class of {\bf geodesic wheel graphs}. They need to play the role of $H = \exp_x(D)$ with 
$D \subset Gr(2,T_xM)$. As just mentioned, there is a need to restrict the set of wheel graphs 
which qualify to play the role of the geodesic sheets $H$. Figure~(\ref{wheel}) illustrates how
wild a wheel graph could look like, even if it is embedded nicely. We obviously have to restrict the
class of embedded wheel graphs which qualify as replacements for elements in $Gr(2,T_xM)$. 

\begin{figure}[!htpb]
\scalebox{0.6}{\includegraphics{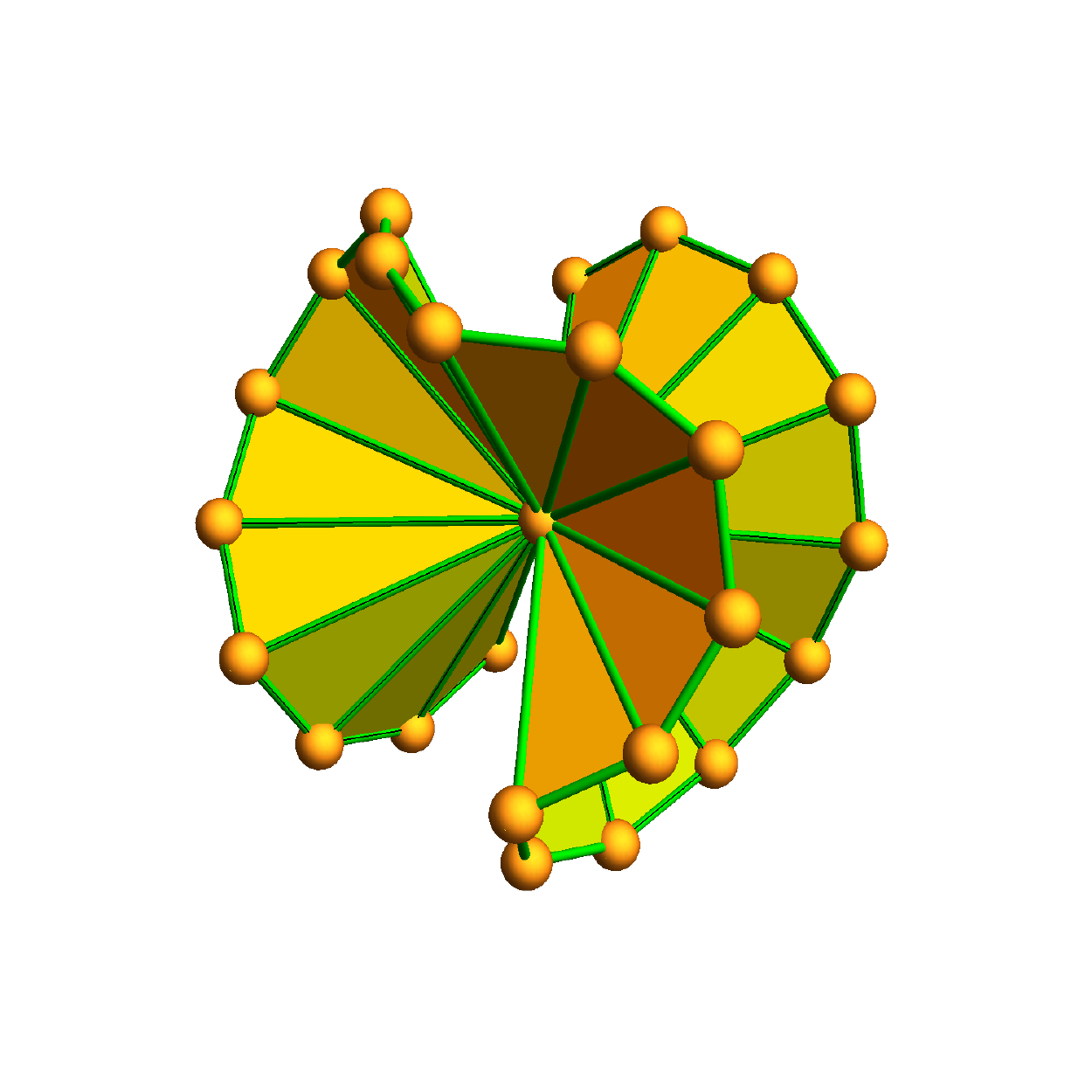}}
\label{wheel}
\caption{
Embedded $2$-dimensional wheel graphs in $d$-graphs can be wild
and can in general not serve as the analogue of geodesic surfaces $H = \exp_x(D)$.
Requiring all wheel graphs to have positive curvature produces only very 
small graphs. This was shown in \cite{SimpleSphereTheorem}. In dimension $2$, there are
only 6 positive curvature surfaces with this strong definition. In general, they are
all spheres. 
}
\end{figure}

\paragraph{}
Lets look at a small geodesic sphere $S_r(x)$ in a Riemannian manifold  $M$
and intersect a small sphere $S_r(x)$ with $H = \exp_x(D)$, where $D$ is a disc in a $2$-dimensional 
plane in $T_xM$. It produces a $1$-dimensional circle $C$ in the $(d-1)$-dimensional sphere $S_r(x)$. This 
circle is close of being a closed geodesic in $S_r(x)$. It is not actually a closed geodesic in general
because  $S_r(x)$ is a $(d-1)$-dimensional Riemannian manifold diffeomorphic to a sphere which in general 
admits only a finite number of closed periodic orbits, so that for most planes $H$ given by an
element of the Grassmanian ${\rm Gr}(2,T_xM)$, the circle $S_r(x) \cap H$ is not a closed periodic
geodesic circle in $S_r(x)$. Still, as it is close, this is what we can ask for in the discrete.

\paragraph{}
A circular subgraph $C$ of a unit sphere $S(x)$ in a $d$-sphere $S$ is called a {\bf geodesic grand circle}, 
if for for any triangle  $x,a,b$ with $a,b \in C$, and every other point $c$ in $S(x)$, all three connections 
$a \to b, b \to c, c \to a$ in $C$ are minimal geodesics
and every $c \in C$ has the property that it maximizes $d(a,c) + d(b,c)$ among all points $c \in S(x)$. 
We do not claim that for a given $a,b$ the ``grand circle" $C$ is unique, it is not in general but the 
length of $C$ is determined. The usual curvature $1-|C|/6$ is determined but 
the $\mu$-curvature is not.

\paragraph{}
In the continuum, the definition means that $C$ is a geodesic triangle through $a,b$. It might not actually be
a closed geodesic as most pairs of points are not part of a geodesic circle if the sphere is not round. 
But for any two close points $a,b$ in a sphere $S(x)$ we can look at a geodesic connecting $a,b$ 
on a different side than the short connection. In the continuum, we can define it as the geodesic 
through $a,b$ which minimizes the total angle difference between end velocity vectors. 
Also in the discrete, we might have difficulty to prove that $c$ exists if we do not assume $a,b$ to be 
neighboring points. Take any geodesic triangle
connecting $a,b,c$ is what we call a {\bf geodesic circle}. 
Together with $x$ the circle produces a geodesic wheel. It plays the role of $H={\rm exp}_x(D)$ in 
the continuum. 

\section{The Hopf sign conjecture} 

\paragraph{} 
The classical Hopf conjecture asking whether an even-dimensional positive curvature
Riemannian manifold $M$ has positive Euler characteristic $\chi(M)$ or whether a
negative curvature $2d$-Riemannian manifold $M$ has positive $(-1)^d \chi(M)$
is maybe the most popular open question in 
global Riemannian geometry and perhaps even in the entire field of Riemannian geometry
\cite{Berger2002}. 

\paragraph{}
Despite the elegance and urgency of the Hopf conjectures, they have remained open. 
The original Hopf conjecture follows in $2$-dimensions by Gauss-Bonnet, in 
$4$ dimensions by the {\bf Synge theorem}, Hurewicz and Euler-Poincar\'e 
and in the case of hypersurfaces $M$ because $M$ is then a sphere by convexity of the enclosed 
region in the ambient space $E$. Under a pinching condition, the conjecture follows from 
{\bf sphere theorems} like the Rauch-Berger-Klingenberg sphere theorem. In dimension $4$
also the algebraic argument with the Gauss-Bonnet-Chern integrand $K$ works. As Milnor noticed,
some clever coordinates allow to write 
$K=K_{12} K_{34} + K_{13} K_{42} + K_{14} K_{23} + R_{1234}^2+R_{1342}^2 +R_{1423}^2$ in terms
of sectional curvatures and squares of Riemann curvature tensor entries. 
Already in dimension 6 and higher the stronger algebraic 
question fails as an example of Geroch has shown \cite{Geroch,Klembeck}.  

\paragraph{}
A bit stronger than the Hopf conjecture is the following conjecture. The 
assumption is now not that the sectional curvatures are positive but that there
is a measure $\mu$ for which all geodesic sheets $H = \exp_x(D)$ have 
positive curvature at $x$ with respect to the index expectation induced 
on $H$. We say then that $M$ has {\bf positive sectional $\mu$ curvatures.}
 
\conjecture{
If $M$ is a $2d$ Riemannian manifold $M$ with positive sectional $\mu$ curvatures, 
then it has positive Euler characteristic. }

\paragraph{}
This is stronger because it implies the classical Hopf conjecture:
a general compact Riemannian manifold can be Nash embedded into an 
ambient Euclidean space $E=\mathbb{R}^n$. 
The natural $SO(n)$-invariant measure on the unit sphere $S$ of $E$ 
induces now a measure $\mu$ on linear functions $f(x)=v \cdot x$ with $v \in S$. 
The-$\mu$ curvature of a geodesic disc $\exp_x(D) \subset M$ with a 
$2$-dimensional disc in $T_xM$ is then equal to the induced $\mu$ 
curvature. So, positive curvature implies positive $\mu$-curvature for this
particular $\mu$. The above conjecture would then imply 
$\chi(M)>0$ and settle the original Hopf conjecture. It is certainly also
possible, that the original conjecture is true and the stronger conjecture fails. 

\paragraph{}
One reason for making the conjecture stronger is that the more general version allows
continuous deformations of the measure $\mu$, allowing to use deformation techniques. 
Deformations of the Riemannian metric have been tried early \cite{Bourguignon75} and
also Ricci flow techniques suggest new possibilities as the differentiable sphere theorem 
has shown. In the discrete however, where we are in a combinatorial setting and have 
no Riemannian metric to deform, we have a measure $\mu$ which can be deformed 
in a simplex of measures. When restricting to finite dimensional simplicies,
we can use linear programming algorithms. 

\paragraph{}
The conjecture can now be explored for graphs. The analogue conjecture is: 

\conjecture{ 
If $M$ is an even-dimensional $d$-graph $G$ of positive sectional $\mu$-curvatures, 
then it has positive Euler characteristic.}

\paragraph{}
Allowing for more general curvatures allows also to revive and explore a 
{\bf more general algebraic Hopf conjecture} for Riemannian manifolds: 

\conjecture{
If $M$ is an even-dimensional positive curvature Riemannian manifold $M$ 
(in the classical sense), there exists a measure $\mu$ on the space of Morse
functions $\Omega$ for which the Euler curvature $K_{\mu}(x)$ is positive. }

\paragraph{}
If answered positively, this would imply that $\chi(M)=\int_M K_{\mu}(x) dx>0$ and settle the Hopf conjecture.
The original {\bf algebraic Hopf conjecture} \cite{Weinstein71} was confirmed in dimension $4$ by Milnor
(\cite{Chern1955} Theorem 5 credits an oral communication of Milnor) 
but it turned out to fail in dimension $6$ or higher by counter examples given by 
Geroch \cite{Geroch,Klembeck} so that positive curvature does not imply positive Gauss-Bonnet-Chern curvature 
in general. Allowing for more flexibility with respect to the definition of sectional and 
Euler curvature allows to explore more general questions which can be ported to the discrete: 

\conjecture{
If $M$ is an even-dimensional graph $G$ of positive $\mu$-curvature, then there exists an 
other measure $\nu$ for which the Euler curvature $K_{\nu}(x)$ is positive.
}

\paragraph{}
This again would imply $\chi(M)=\sum_M K_{\mu}(x) > 0$ and settle the discrete Hopf conjecture.
Why is this exciting? Because this can be checked in principle on every given graph using 
a finite procedure. It is a linear programming problem: start with a measure $\mu$ giving
positive curvature, now look whether one can modify $\mu$ to achieve positive Euler curvature
at every point. It might well be that the modification will destroy positive sectional curvature
at some places, but who cares? We are eventually only interested in the result, the Euler
characteristic of $G$. The deformation could be done similarly than a Ricci flow in a
space of measures. This space of measures is a finite dimensional polyhedron in a 
linear space of measures. 

\section{The Hopf product conjecture}

\paragraph{}
Finally, let us look at a discrete analogue of the second Hopf conjecture, which is a
statement about $S^2 \times \S^2$. Classically, the conjecture is that
$S^2 \times S^2$ can not carry a positive curvature metric. The analogue stronger question is: 

\conjecture{
The manifold $S^2 \times S^2$ does not admit a $\mu$-measure of positive curvature. 
}

\paragraph{}
As mentioned in \cite{Berger2002}, there is a more general {\bf question of Yau} from 1982 who
observed that no simply connected non-negative curvature manifold is known for which it is proven
that there is no positive curvature metric. Here is the adaptation of the Yau question to 
graphs: 

\conjecture{
Is there a simply connected $d$-graph $G$ admitting a $\mu$ measure of non-negative curvature 
for which one can show that it has no $\mu$ measure of positive curvature? }

\paragraph{}
The $S^2 \times S^2$ Hopf conjecture in the discrete becomes:

\conjecture{
No graph implementation of $S^2 \times S^2$ admits a $\mu$-measure of positive curvature. 
}

\paragraph{}
Given any concrete graph, this is a linear programming problem. 
We can get a $\mu$-curvature on $S^2 \times S^2$ which has constant 
Euler curvature. We need to know whether this can be deformed so that
all sectional curvatures are positive. 

\section{Positive curvature 2-graphs}

\paragraph{}
A $2$-graph is a finite simple graph for which every unit sphere is a $1$-sphere, 
that is a circular graph with $4$ or more vertices. It is realized as a convex 
polyhedron of positive curvature if the angle sum $\alpha(x)$ at every vertex 
satisfies $K(x)=1-\alpha/(2\pi)>0$. 
A polyhedron can be seen as a topological 2-manifold in which the curvature is a pure point (Dirac) measure 
supported on finitely many points and such that every face is a triangle. We
say then that the graph $G$ is {\bf realized as a positive curvature polyhedron} 
in $\mathbb{R}^3$. The Gauss-Bonnet theorem tells $\sum_x K(x) = \chi(G)$. 
Here is the discrete Hopf statement in two dimensions:
if a finite simple $2$-graph $G$ with $n$ vertices is realized as a positive curvature polyhedron, 
then it has positive Euler characteristic. As the convex hull in the geometric realization is a ball, 
we know even that $G$ is a 2-sphere. The reverse is true. If $G$ is a 2-sphere it can be realized 
as a positive curvature polyhedron. 

\paragraph{}
The notion of positive curvature allows also to put positive curvature on the discrete 
projective plane. There is an implementation of the projective plane with 15 vertices for which
we constructed by hand an average of 65 index functions $i_f(x)$ which add up to a 
positive curvature. The strategy was to make a list of 15 index functions with 3 critical
points each such that $f_j(j)=1$, then take combinations to get all coordinates positive. 
This linear programming problem could be solved by hand. In general, we might want to 
use a computer and the simplex algorithm. 

\paragraph{}
Are there other cases beside the two-dimensional cases, where the discrete 
Hopf conjecture is verified?  We can verify it for hypersurfaces in Euclidean spaces. 

\section{About the classical Hopf conjecture}

\paragraph{}
A smooth compact even dimensional Riemannian manifold $M$ is defined to have {\bf positive curvature}
if all sectional curvatures are positive. Heinz Hopf conjectured that such a 
manifold $M$ must have positive Euler characteristic $\chi(M)$. This is the famous {\bf Hopf conjecture}.
In two dimensions, the statement is well known, as the classification of surfaces
shows that any positive curvature manifold is either a sphere or a projective plane, which both 
have positive Euler characteristic. In dimension $2$ it follows also by Gauss Bonnnet as sectional 
curvature agrees then with Euler curvature. The 2-sphere has Euler characteristic $2$
the projective plane, the quotient of a $\mathbb{Z}_2$ action on the 2-sphere
has Euler characteristic $1$. One can assume the manifold to be orientable
in Hopf as taking a double cover doubles the Euler characteristic. 

\paragraph{}
The definition of sectional curvature for a two dimensional plane $P$ in $T_xM$ spanned by $v,w$ is 
$(K(x,v,w)=R(v,w,v,w)/|v \times w|^2$, where $R$ is the Riemann curvature tensor at $x$. 
A tensor-free definition is the Bertrand-Puiseux formula $\lim_{r \to 0} (|C_0(r)|-|C(r)|) 3\pi/r^3$,
where $C_0(r)$ is a circle of radius $r$ in the flat plane and $C(r)$ is a circle of radius $r$ in
the 2-dimensional surface ${\rm exp}_x(P)$. [ One of the difficulties when defining the analogue of
geodesic patches $\exp_x(D)$ is to describe what circular subgraphs $S(x)$ should be the analogue
of $\exp_x(D) \cap S_r(x)$.  ]

\paragraph{}
Sphere theorems give an other hint: if the manifold is orientable and the sectional
curvatures are sufficiently pinched, meaning that the maximal
and minimal curvature ratio is smaller than 4 at every point, then the manifold
is a sphere and as even dimensional spheres have all Euler characteristic $2$ by the Schl\"afli \cite{Schlafli}
formula $\chi(S^n) = 1+(-1)^n$ the Hopf conjecture holds under a pinching condition. The
differentiable sphere theorems even give more hints: the Ricci flow smooths out a positive curvature
manifold and deforms it to a sphere. However, the Ricci flow techniques are difficult; it is necessary 
to regularize the flow for example. It is not inconceivable that some sort of curvature heat flow smooths
the geometry and deforms a positive curvature manifold to some generalized space form, where Euler curvature
is positive everywhere. None has been found so far, but there is a nagging suspicion that there could be a 
matching deformation flow. But such a deformation argument might not exist. Worse yet, the conjecture 
could be wrong. 

\paragraph{}
There is a substantial difference between non-negative curvature and positive curvature but the
product Hopf conjecture explains that the differences are not yet understood. 
Many examples are given in \cite{Ziller}. 
A non-negative curvature manifold can be the flat $2$-torus for example or then 
$S^2 \times S^2$, which as a product of two positive curvature manifolds has non-negative curvature. 
The Hopf question about positive curvature metric on $S^2 \times S^2$.
can be asked for graphs also: can we find an 
average of Poincar\'e-Hopf indices on a discrete $S^2 \times S^2$ (for the Cartesian product in graphs,
see \cite{KnillKuenneth} such that the product of a $d$ graph and a $q$ graph a $p+q$ graph), 
such that this curvature is positive? 
If that is the case, then very likely also in the continuum that some index average on the manifold is positive. 

\section{Attacks on the Hopf conjecture}

\paragraph{}
The Hopf product problem like putting a positive curvature metric on $S^2 \times S^2$ is less clear.
Results like \cite{Weinstein70} telling that no positive metric
implementation can be embedded in $\mathbb{R}^6$ suggest that no positive metric can exist after all.
The sign conjecture is reasonable for many reasons. In \cite{Weinstein70} it is also shown that
if the $2d$ manifold is embedded in $R^{2d+2}$, then $b_2=0$ and $\chi(M)>0$. 
It holds in dimension $2$ and $4$ and in the positive curvature case under a pinching condition by the
sphere theorems. Also, if the $2d$-manifold $M$ is a product of smaller dimensional 2-manifolds with 
positive curvature then because the curvatures multiply $K(x_1, \cdots, k_d) = \prod K(x_i)$ one has
a positive Euler integrand and positive Euler characteristic, also because $\chi(M) = \prod_i \chi(M_i)$
by general principles. The paper \cite{Bourguignon75} mentions a curious relation between the product 
conjecture and the sign conjecture: if a positive curvature metric would exist on $S^3 \times S^3$, then
this would give a negative answer to the sign conjecture.

\paragraph{}
An early approach the Hopf conjecture has been through Gauss-Bonnet-Chern but that failed.
This is not the end of the approach however as one can imagine that some other curvature satisfying Gauss-Bonnet
is positive under a positive curvature assumption. Any probability measure $\mu$ on the space $\Omega$ of
Morse functions produces a {\bf curvature} as {\bf index expectation} $K(x)={\rm E}[i_f(x)]$, 
where $i_f(x)$ is the Poincar\'e-Hopf index satisfying $\sum_x i_f(x) = \chi(M)$. 
It is not excluded that some average of indices produces a curvature
which is positive for positive curvature manifolds. 
(Weinstein excluded curvatures coming from curvature tensors satisfying some 
semi-algebraic conditions.) But the integral geometric curvatures obtained by averaging Poincar\'e-Hopf indices
are not necessarily of this form. While this would settle the Hopf conjecture, no
such measure has emerged yet. Integral geometric methods in differential geometry trace back the mathematical 
genealogy lines like Banchoff-Chern-Blaschke. \cite{Banchoff67} 

\paragraph{}
An other approach is to find a Morse function which has only critical points of positive indices $i_f(x) \geq 0$,
where $i_f(x)=1$ at all critical points. 
Poincar\'e-Hopf $\chi(G) = \sum_x i_f(x)$ gives from this then positive Euler characteristic and
would settle the conjecture. The idea for such a function $f$ comes from the idea of embedding
the surface in a surrounding space which has a natural function $f$ with nearly flat level curves. 
This approach borrows from the fact that if $M$ is a positive curvature hypersurface in an odd dimensional 
space, then indeed, there is such a function and the Hopf conjecture is true. 

\paragraph{}
The Morse inequalities $b_k(G) \leq c_k(G)$, where $b_k$ are the Betti numbers and $c_k$ the minimal number
of critical points a Morse function can have of Morse index $k$, assures even that the odd Betti 
numbers are zero (implying again
by Euler-Poincar\'e $\chi(G) = \sum_k (-1)^k b_k >0$). If one would find a Morse 
function $f$ with just two critical points, the maximum and minimum, then by Reeb's theorem or by Lusternik-Schnirelmann,
one would have a sphere. Since there are positive curvature manifolds which are not spheres, this statement is 
definitely too strong. But one can try to exclude critical points with odd index. 
We have so far not seen any even dimensional orientable positive curvature manifold which does not 
allow a Morse function with only even Morse indices and so only even non-zero Betti numbers. 

\paragraph{}
Cohomology is an other attack angle. 
A bit weaker is the statement that positive curvature implies that all odd Betti numbers are zero. 
The four dimensional complex projective plane has the Betti vector $\vec{b}=(1,0,1,0,1)$ so that the Morse 
inequalities assure that any Morse function must have a critical point $x$ of Morse index $m(x)=2$. 
This complex two plane $\mathbb{C} P^2)$ which is homeomorphic to $S^5/S^1$, is an example,
where the geometry forces any Morse function to have a critical point of 
Morse index $2$. Equipped with the natural metric, this manifold satisfies the
quarter pinching condition showing that the sphere theorems
are sharp with respect to the pinching constant. 

\paragraph{}
The four dimensional case is special also as the Hopf conjecture is then already a consequence of the 
Synge theorem \cite{Synge}. The Synge theorem assures that an orientable manifold of positive curvature is simply 
connected and if non-orientable has fundamental group $\mathbb{Z}_2$. 
This is a result from calculus of variations: a homotopically non-trivial closed geodesic is a critical point
of a variational problem. The analogue of the second derivative test which is given in the form of the
Jacobi equations shows however that this can not be a minimum. A ``rubber-band" on an orientable positive curvature
surface slips off as it can be made smaller by moving it. A rubber band on a doughnut however is stable if it is placed at the
most narrow part of the doughnut. The non-orientability also prevents a some rubber bands to slip off a projective plane.
In four dimensions, also Gauss-Bonnet worked as the algebraic Hopf conjecture holds there. 

\paragraph{}
The Morse case can be written a bit differently. On an even dimensional manifold, we can 
write the symmetric index $j_f(x)=(i_f(x)+i_{-f}(x))/2$ as
$j_f(x) = 1-\chi(B_f(x))/2$, where $B_f(x) = S(x) \cap \{ f(y) = f(x)\}$ is the center manifold at $x$. 
In the Morse case, at a critical point, $B_f(x)$ is then either empty or a product of two spheres.
The negative indices can occur only if both factors have even dimension. In the simplest case of
$d=2$, the center manifold $B_f(x)$ at a critical point is either empty or the product $S_0 \times S_0$
of two zero dimensional spheres. Indeed, at a maximum or minimum, $B_f(x)$ is empty and at a saddle point
the level curve through the origin is a crossed pair of lines which intersects a small geodesic circle
$S_r(x)$ in 4 points. 

\section{An illustration using hypersurfaces}

\paragraph{}
To illustrate the idea of preventing classes of critical points which leads via Morse inequalities to 
bounds on the Betti numbers, 
let us prove the following elementary {\bf hyper surface sphere theorem}.
already investigated early by Hopf \cite{HopfCurvaturaIntegra}.
We restrict to $4$-manifolds. In general, for positive curvature 4 manifolds, the relation $b_1=0$ by 
the simply connectedness (Synge) and $b_3=0$ by Poincar\'e duality so that the Hopf conjecture in 4 dimensions 
follows already from the fact that all odd Betti numbers zero zero leading to a homology sphere and so Euler
characteristic $2$ as only $b_1=b_4=1$ and the others are zero. But we want to illustrate the center manifold method. 
A hypersurface $M$ of positive curvature bounds a convex region in $\mathbb{R}^{d+1}$ so 
that it bounds a solid sphere and must be a sphere, but the we want to explain the local method. 

\paragraph{}
Here is a weak Synge type result establishing $b_1=0$ for hypersurfaces:  \\
There can not be any critical point with Morse index $1$ or $3$ if $M$ is a compact $4$-manifold
with positive curvature which is embedded as a hypersurface in $E=\mathbb{R}^5$.

\begin{proof}
If there is such a point $x$, the center manifold $B_f(x)$ is disconnected as it is $S^{k-1} \times S^{4-k-1}$ for $k=1$
or $k=3$ which means $B_f(x)=S_2 \times S_0$, the union of two $2$-spheres. They are asymptotically symmetric with respect
to $x$. Without loss of generality, we can assume $f(y)>f(x)$ for $y$ in the interior of the spheres
$B_f(x) = \{ y \; | \; f(y)=f(x), d(x,y)=r\}$. Take a two dimensional plane through $x$ which is tangent at points
$y,y'$ to the two spheres, where the two spheres are on different sides.
Positive curvature implies that $f(\phi y)>0$ and
$f(\phi y')>0$. But one of the points $\phi(y), \phi(y')$ is in the interior of $B_f(x)$, the other outside.
This is a contradiction. Therefore, it is not possible that $B_f(x)$ is disconnected.
Having excluded disconnected $B_f$, we have excluded critical points of Morse index $1$ or $n-1$.
The Morse inequalities tell now that $b_1=-0$. By Poincar\'e-duality, one has $b_3=0$. 
\end{proof}

{\bf Hopf for hypersurfaces}: 
If $M$ is an $4$-dimensional compact positive curvature 
Riemannian manifold which is realized as a smooth hypersurface 
in the Euclidean space $E=\mathbb{R}^5$, then any ambient linear function $f$ in $E$ 
which is Morse on $M$ has only critical points with positive index. Either by Poincar\'e-Hopf or the
Morse inequalities, positive Euler characteristic follows. 

\begin{proof}
One knows that for almost all linear functions $f$ 
in the ambient space $E$, the induced function on $M$ is a Morse function.
Lets just take a linear function $f$ in $E$ which induces
a Morse function $f$ on $M$. We show that $f$ can not have indices of Morse index $1$ or $3$. 
The symmetric index $j_f(x)=(i_f(x)+i_{-f}(x))/2$
is  $1-\chi(B_f(x))/2$, where $B_f(x) = S(x) \cap \{ f(y) = f(x)\}$ is the center manifold.
This center manifold is a $2$-dimensional sphere in the regular case (leading to $j_f(x)=1-2/2=0$) 
or a product of two spheres $B_f = S_2 \times S_0$ or $B_f= S_0 \times S_2$ (for Morse index $1$ or $3$) 
or then empty for maxima or minima (Morse index $0$ or $4$) or then $B_f = S_1 \times S_1$ in the case of 
Morse index $2$. \\ 
To prove the result in the 4 dimensional case we only have to exclude $S_2 \times S_0$ and $S_0 \times S_2$ 
as critical manifolds $B_f(x)$. \\
Take a two dimensional plane $P$ in through $x$ in $E$ which is tangent to $B_f(x)$. 
On this plane, $f$ is constant as it is defined on a two dimensional plane tangent to a surface 
$B_f(x)$ on which $f$ is constant and because $f$ is linear. The space $B_f(x)$ is disconnected.
The two dimensional plane will in general not be tangent to the other component. 
But the plane is $r^3$ close on be tangent on the other connected component of $B_f(x)$. 
Now apply the exponential map to $P$ at $x$, (this is defined as $P$ can be naturally be identified
with part of the tangent space $T_xM$). This produces a non-linear surface $H = \exp_x(D)$ which is in $M$ and 
on $S_r(x)$ of order $r^2$ away from $P$ on one side. But $H \cap S_r(x)$ is empty
as its function values changes everywhere to a either positive or negative value except at $x$. 
\end{proof} 

\paragraph{}
In two dimensions, $B_f(x)$ is empty for the maxima and minima and the 4-point space 
$S_0 \times S_0$ at a saddle point. The later does not happen for positive curvature. 
It has also been realized by Hopf already that any positive curvature manifold which is not a sphere can not be written as a hypersurface
in an Euclidean space. Also, as Hopf remarked, it implies things like that the projective plane can not be embedded
into $\mathbb{R}^3$ or that 
the complex plane $\mathbb{C}P^2$ can not be embedded into $\mathbb{R}^5$. 
This $4$-manifold is homeomorphic to $S^5/S^1$ and Betti number $(1,0,1,0,1)$.

\paragraph{}
The notion of center manifold $B_f(x)$ can also be considered in the discrete. 
For an even dimensional d-graphs, we can write the symmetric index $j_f(x)$  as
$j_f(x) = 1-\chi(B_f(x))/2$, where $B_f(x) = \{ S(x) \cap \{y \; | \;  f(y)=f(x) \}$
is the central manifold at $x$. 
In the Morse case, $B_f(x)$ is either a sphere of dimension $d=2$ (the regular case)
or the product of two spheres and so has $\chi(B_f(x)) \in \{ 0,4 \}$. 
If $f$ induces on $B_f(x)$ a Morse function, then we can compute $\chi(B_f(x))$ by
computing more central manifolds. Now, we can say when $G$ has positive curvature.
In the case $d=2$, the graph has positive curvature if there is a probability measure $\mu$
on Morse functions for which $K(x)=E[j_f(x)]$ is positive everywhere. Positive curvature
graphs in the old sense have positive curvature but there can be more. 

\section{Constructing measures}

\paragraph{}
Here is a construction of a curvature $K(x)$ on $M$ which is positive on most of $M$. 
The construction works locally in a geodesic patch, where we can use geodesic coordinate systems.
The problem of doing that globally over the entire manifold is not done
as it would prove the Hopf conjecture. Here is a proposition which is not so surprising seeing
that one can find a diffeomorphism $\phi$ from $M$ to an other manifold $N$ which is mostly a sphere,
take from there the measure $\mu$ obtained from a Nash embedding which leads to the Gauss-Bonnet-Chern
curvature which is then positive, the use $\phi$ to produce Morse functions on the ambient space of $M$. 
It is not so much the result which is interesting, it is the proof which links sectional curvatures with
$\mu$-curvature. 

\begin{propo}
Given a Riemannian manifold $M$ of positive curvature and an open patch $U=\exp_x(D)$ with a ball $D=B_r(0) \subset T_xM$ 
of radius smaller than the injectivity radius of $M$ at $x$, then there is a measure $\mu$ on Morse functions of $M$
such that the index average curvature $K=K_{\mu}$ is positive everywhere in $U$. 
\end{propo}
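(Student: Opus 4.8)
The plan is to reduce the statement to the integral-geometric identity recalled in the introduction: if a Riemannian manifold is Nash embedded isometrically into a Euclidean space $E=\mathbb{R}^n$ and $\mu$ denotes the push-forward of the rotation-invariant probability measure on the unit sphere $S\subset E$ under $v\mapsto f_v$, $f_v(y)=v\cdot\iota(y)$, then the index expectation $K_\mu(y)=\mathrm{E}_\mu[i_{f_v}(y)]$ is the Chern--Gauss--Bonnet (Pfaffian) integrand of the metric carried by the embedding. The only freedom I would exploit is that $M$, \emph{as a smooth manifold}, may be Nash embedded while carrying \emph{any} Riemannian metric $g'$, not necessarily the given one: for almost every $v$ the function $f_v$ is still a Morse function on the smooth manifold $M$ (a standard transversality argument), so the push-forward $\mu$ is a legitimate probability measure on $\Omega(M)$, and Poincar\'e--Hopf gives $\chi(M)=\sum_x i_{f_v}(x)$ for each such $v$, hence $\chi(M)=\int_M K_\mu\,dV$ no matter which $g'$ was used. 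Thus it suffices to produce a metric $g'$ on $M$ that is \emph{round} (of constant positive curvature) on the patch $U$, so that the Chern integrand of $g'$, being built from the constant positive sectional curvatures, is manifestly positive there.

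Constructing $g'$ is the routine part. Fix any $\rho>0$ and a radius $r'$ with $r<r'<\mathrm{inj}_x(M)$, so that $U'=\exp_x(B_{r'}(0))$ is an embedded ball containing $\overline U$. Being a smooth ball, $U'$ is diffeomorphic to the interior of a geodesic ball of the round sphere $S^{2d}(\rho)$ of curvature $\kappa=1/\rho^2$; let $g'_0$ be the pull-back of the round metric under such a diffeomorphism. Let $g_1$ be an arbitrary Riemannian metric on the open set $M\setminus\overline U$, pick a partition of unity $\{\chi_0,\chi_1\}$ subordinate to the cover $\{U',\,M\setminus\overline U\}$, and set $g'=\chi_0\,g'_0+\chi_1\,g_1$. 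This is a smooth Riemannian metric on $M$ (a pointwise convex combination, or selection, of positive-definite forms), and since $\operatorname{supp}\chi_1$ is disjoint from $U$ one has $g'=g'_0$ throughout $U$.

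Finally I would Nash embed $(M,g')$ isometrically into some $E=\mathbb{R}^n$, take $\mu$ as above, and invoke the identity from the first paragraph: $K_\mu$ equals the Chern integrand of $g'$. Since that integrand is a \emph{local} curvature invariant, on $U$ it coincides with the constant Chern integrand of the round sphere, which equals $\chi(S^{2d})/\mathrm{Vol}(S^{2d}(\rho))=2\rho^{-2d}/\mathrm{Vol}(S^{2d}(1))>0$. Hence $K_\mu>0$ everywhere on $U$, while $\chi(M)=\int_M K_\mu\,dV$ holds automatically.

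I expect the one genuine obstacle to be that the argument is irreducibly local: globalizing it would require a metric on all of $M$ whose Chern integrand is everywhere positive, and exhibiting such a metric on an arbitrary positive-curvature $M$ is exactly the Hopf conjecture, so no globalization is to be expected by this route (indeed the proof barely uses positive curvature of $M$ itself, only that $\mathrm{inj}_x(M)>0$, which matches the remark preceding the proposition that it is the method rather than the statement that matters). The remaining points needing care are the smooth gluing of $g'$ across $\partial U$ via the partition of unity and the precise bookkeeping identifying the embedding index expectation $\mathrm{E}_\mu[i_{f_v}(y)]$ with the Pfaffian integrand of $g'$.
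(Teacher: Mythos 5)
Your argument is correct, but it is a genuinely different route from the paper's --- in fact it is precisely the route the author anticipates and deliberately sets aside in the paragraph preceding the proposition (``one can find a diffeomorphism $\phi$ from $M$ to an other manifold $N$ which is mostly a sphere, take from there the measure $\mu$ obtained from a Nash embedding\dots It is not so much the result which is interesting, it is the proof which links sectional curvatures with $\mu$-curvature''; the figure caption repeats the point). You deform the metric to be round on $U$, Nash embed the deformed metric, and read off positivity from the constant Gauss--Bonnet--Chern integrand of the round patch; as you note, this never uses the positive curvature of the given metric, only that $\mathrm{inj}_x(M)>0$, so it proves a stronger-hypothesis-free statement but severs the link to the hypothesis. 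The paper instead Nash embeds $M$ with its \emph{given} metric, pairs the coordinates of a geodesic chart as $z_j=(x_{2j-1},x_{2j})$, and builds from $(w,f_1,\dots,f_d)\in M\times\Omega^d$ the separated functions $F(z)=\sum_k f_k(w_1,\dots,z_k,\dots,w_d)$, so that the Poincar\'e--Hopf index factorizes, $i_F(z)=\prod_k i_{g_k}(z_k)$, over two-dimensional geodesic slices; positivity of each factor's expectation is exactly positivity of the corresponding sectional curvature, and independence gives $K(z)=\prod_k E[i_{g_k}(z_k)]>0$. What the paper's proof buys is (i) the curvature hypothesis actually enters, which is essential if one hopes to globalize the construction toward the Hopf conjecture (your route cannot be globalized even in principle, since the deformed metric forgets the geometry), and (ii) the sign statement in the negative-curvature case, $(-1)^dK>0$, comes for free from the product formula. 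What your route buys is economy: it reduces everything to the classical identification of index expectation for linear height functions with the Pfaffian integrand, and avoids the paper's delicate issue of whether the locally defined $F$ can be made globally Morse (which the paper itself only handles with a bracketed caveat about partitions of unity and Sard). Both proofs share that unresolved globalization step at the boundary of the patch; your construction is cleaner there since the glued metric is honestly smooth and the Nash embedding is global from the start.
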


\paragraph{}
\begin{proof}
If $f$ is in the set $\Omega$ of Morse functions on $M$, 
the Poincar\'e-Hopf theorem $\sum_x i_f(x) = \chi(M)$	
expresses the Euler characteristic $\chi(M)$ as a sum of its Poincar\'e-Hopf indices  $i_f(x)$.
Any probability space $(\Omega,\mu)$ of Morse functions then defines a curvature by expectation $K(x) = E[i_f(x)]$. 
Fubini leads to Gauss-Bonnet $\int_M K(x) \; dV(x) = \chi(M)$, where the volume measure $dV$ is normalized 
to be a probability measure on $M$.  \\

By Nash, $M$ can be isometrically embedded in an ambient Euclidean space $E$. The set of linear functions 
$f(x)=v \cdot x$ with $v$ in the unit sphere $S$ of $E$ induces a probability space $(\Omega,\mu)$ if $\mu$
comes from the normalized volume probability measure on $S$. 
Almost every $f \in \Omega$ is Morse restricted to $M$ and induces a Morse function on any geodesic disc $H=\exp_x(D)$,
where $D \subset T_xM$ is a small disk in a $2$-plane of $T_xM$. The index expectation
curvature at $x \in H$ has the same sign than the Gauss curvature of the surface $H$ and 
so the sectional curvature of $H$. 
By assumption, these sectional curvatures are all either positive or negative. \\

Write $z=x$ with $z_j=(x_{2j-1},x_{2j})$.
A point $(w=(w_1, \cdots, w_d),f_1,\dots,f_d)$ in the probability space $M \times \Omega^d$ defines
a new function $F$ in $\Omega$. Its definition is local. 
[A parametrized partition of unity could make it global,
using Sard to keep it Morse but there is a problem with controlling the 
curvature in the intersection of patches.]
Near a given $x \in M$, find geodesic coordinates 
$z=(z_1,\dots,z_d)$ in a neighborhood $U$. Define for any $w \in M$ and $z \in U$ a new Morse function
$F(z_1, \dots, z_d) = f_1(z_1, w_2,\dots,w_d) + \cdots + f_d(w_1,\dots,w_{d-1},z_d)$ in $U$.
Having a map $M \times \Omega^d \to \Omega$ defines by push forward of $dV \times \mu^d$ 
a new measure $\nu$ on $\Omega$. Expectation of $i_F(x)$ over $(\Omega,\nu)$ defines a new curvature $K(x)$.
A critical point $z=(z_1, \dots, z_d)$ for $F$ necessarily is simultaneously a critical point for the functions 
$z_k \to g_k(z_k)=f_k(w_1,\dots,w_{k-1},z_k,w_{k+1},\dots,w_d)$ and the index $i_F(z)$ is the product 
$i_F(z) = \prod_k i_{g_k}(z_k)$. Unlike the Gauss-Bonnet-Chern integrand which by Geroch can fail to have 
a definite sign for $d \geq 3$ and some positive curvature $M$, the curvature $K$ has the right sign.   \\

Under the positive curvature assumption, we have 
$E[i_{g_k}(z_k)] > 0$ for all $k$ and all $z,w$. 
For negative curvature, we have $E[i_{g_k}(z_k)]<0$ for all $k$.
Because the random variables $i_{g_k}(z_k)$ are independent on $M \times \Omega^d$
they are also uncorrelated, so that $K(z) = E[i_F(z)] = \prod_{k=1}^d E[i_{g_k}(z_k)]$
which is positive for positive curvature and negative for odd $d$ and negative curvature.
Gauss-Bonnet now assures that $\int_M K(z) \; dV(z)>0$ in the positive curvature case and 
that $(-1)^d \int_M K(z) \; dV(z)>0$ in the negative curvature case. 
\end{proof}

\begin{figure}[!htpb]
\scalebox{0.6}{\includegraphics{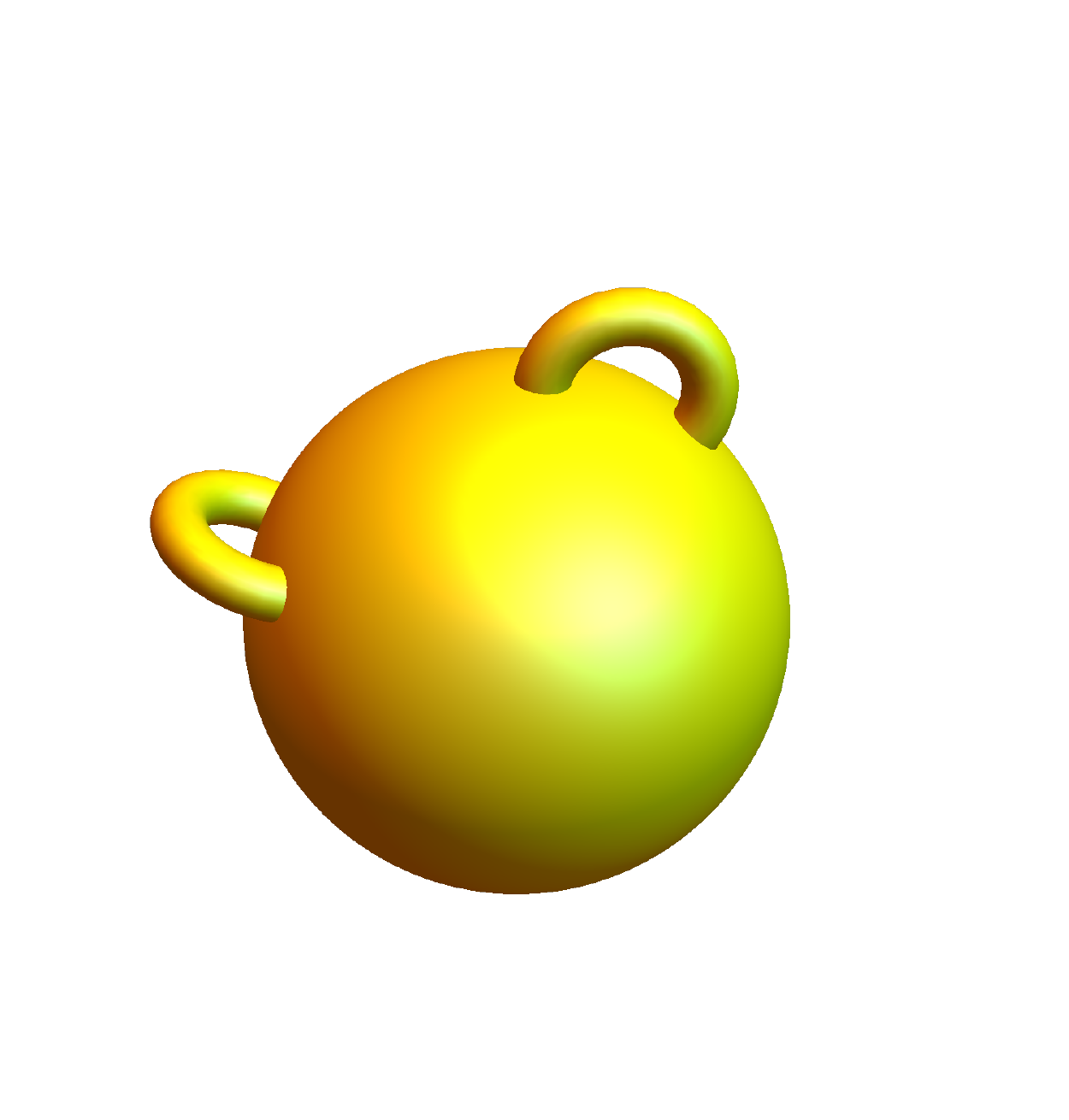}}
\label{sphere}
\caption{
By embedding a suitably deformed manifold into an ambient
Euclidean space one can establish that any manifold can 
have positive Gauss-Bonnet-Chern integrand on an arbitrary 
large part of the manifold. That statement is not so interesting.
What is interesting is a construction of $K_{\mu}$ which involves
the sectional curvatures. 
}
\end{figure}

\paragraph{}
We get so the following statement:

\begin{coro}
Given a positive curvature $2d$-manifold $M$ and $\epsilon>0$. There exists a subset $U$
of measure larger than ${\rm Vol}(M)-\epsilon$ and a $\mu$ curvature $K$ satisfying the Gauss
Bonnet-Chern relation $\int_M K \; dV = \chi(G)$ such that $K$ is positive on $U$. 
\end{coro}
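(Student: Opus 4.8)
The plan is to derive the Corollary directly from the Proposition by a covering and patching argument, quantifying the "most of $M$" in the Proposition's proof. First I would fix $\epsilon>0$ and choose finitely many points $x_1,\dots,x_N\in M$ together with geodesic balls $D_i=B_{r_i}(0)\subset T_{x_i}M$ of radius below the injectivity radius so that the patches $U_i=\exp_{x_i}(D_i)$ cover all of $M$ except a set of volume less than $\epsilon$; since $M$ is compact and has positive injectivity radius this is routine. The Proposition, applied on a single slightly enlarged geodesic patch $U=\exp_x(D)$ containing $\bigcup_i U_i$ up to the $\epsilon$-error set, already produces a single measure $\mu$ on Morse functions of $M$ whose index-expectation curvature $K=K_\mu$ is positive everywhere on $U$. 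Taking this $U$ (of volume $>\mathrm{Vol}(M)-\epsilon$) and this $K$ gives exactly the asserted conclusion, and the Gauss--Bonnet--Chern relation $\int_M K\,dV=\chi(M)$ holds automatically because $K$ is an index expectation (Fubini/Poincar\'e--Hopf), as recorded in the Proposition's proof.

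The one genuinely delicate point, flagged in brackets in the Proposition's proof, is that the construction of the new function $F$ from $(w,f_1,\dots,f_d)$ is \emph{local}: it uses geodesic coordinates valid only on one patch $U$. To cover a subset of volume $\mathrm{Vol}(M)-\epsilon$ by a single coordinate system is generally impossible, so to get the Corollary honestly one must either (a) work with a single geodesic ball $\exp_x(B_r(0))$ whose radius $r$ is just below the injectivity radius at a well-chosen center $x$ and observe that its volume can be made $\geq\mathrm{Vol}(M)-\epsilon$ when $M$ is, say, close to a sphere or has large injectivity radius, or (b) use a parametrized partition of unity to glue the locally defined $F$'s, invoking Sard's theorem to keep the glued function Morse. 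Option (b) is the conceptually right approach but, as the author notes, controlling the sign of the curvature in the overlap regions of the patches is problematic, because there $i_F$ is no longer a clean product of one-dimensional indices. I would therefore present option (a) as the clean proof: replace the cover by a single large geodesic ball, apply the Proposition verbatim, and note that the "interesting" content is the link between sectional curvature and $\mu$-curvature already established in the Proposition, not the measure-theoretic bookkeeping.

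Concretely, the steps in order are: (1) given $\epsilon$, invoke compactness to find a geodesic ball $U=\exp_x(D)$, $D=B_r(0)\subset T_xM$ with $r$ below the injectivity radius at $x$, such that $\mathrm{Vol}(U)>\mathrm{Vol}(M)-\epsilon$ — for a general $M$ one first applies a diffeomorphism $\phi$ (as in the remark preceding the Proposition) deforming $M$ so that the complement of a single geodesic patch has small volume, then pulls the resulting measure back; (2) apply the Proposition to this $U$ to obtain a measure $\mu$ on Morse functions of $M$ with $K_\mu>0$ on $U$; (3) record that $K_\mu$ is an index expectation, hence satisfies $\int_M K_\mu\,dV=\chi(M)$ by Fubini and Poincar\'e--Hopf; (4) conclude by taking this $U$ and $K=K_\mu$. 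The main obstacle, as above, is Step (1): ensuring a single geodesic patch (or a patchable family) exhausts all but $\epsilon$ of the volume while the curvature-sign control from the Proposition's product structure survives; once one is willing to precompose with a volume-concentrating diffeomorphism of $M$, this obstacle dissolves and the rest is immediate.
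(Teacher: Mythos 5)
Your route (a) --- a single geodesic ball $\exp_x(B_r(0))$ with $r$ below the injectivity radius capturing all but $\epsilon$ of the volume --- does not work for a general positive curvature $2d$-manifold: the injectivity radius need not be large enough for one normal ball to exhaust most of the volume, and the Proposition's product construction is only valid inside a single normal coordinate chart. Your patch for this, precomposing with a ``volume-concentrating diffeomorphism'' $\phi$, changes the problem: either $\phi$ pushes the metric forward (and then the target is isometric to $M$, so nothing is gained), or the target carries a different metric whose sectional curvatures are unrelated to those of $M$, in which case the Proposition's hypothesis and its curvature-sign argument no longer refer to $M$ at all. That is exactly the ``not so interesting'' construction the paper sets aside in the remark before the Proposition, and it does not yield the Corollary \emph{via} the Proposition.

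The paper's actual proof is your option (b), but with one decisive extra idea that you reject for the wrong reason: the good patches $U_k=\exp_{x_k}(B_{r(x_k)}(0))$ are chosen \emph{pairwise disjoint}, with total volume larger than ${\rm Vol}(M)-\epsilon$. The construction of the Proposition is run independently on each $U_k$, and the locally defined functions $F_k$ are glued by a parametrized partition of unity, with Sard's theorem (in the form given by Nicolaescu) guaranteeing that generic glued functions are still Morse. Because the $U_k$ are disjoint, there are no overlap regions among the good patches; all of the gluing, and hence all loss of control on the sign of $K$, is confined to $M\setminus\bigcup_k U_k$, which is precisely the set of measure less than $\epsilon$ on which the Corollary claims nothing. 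Your objection that in overlap regions $i_F$ is no longer a clean product of one-dimensional indices is true but irrelevant once the patches are disjoint. The Gauss--Bonnet relation $\int_M K\,dV=\chi(M)$ then holds as you say, since $K$ remains an index expectation of globally defined Morse functions. The missing idea is disjointness of the cover, not a volume-concentrating diffeomorphism.
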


\begin{proof}
By doing the construction of the proof independently in each $U_k$ of
a finite disjoint collection $U_k=\exp_{x_k}(B_{r(x)})$ 
with $r(x_k)$ is smaller than the injectivity radius at $x_k$, there exists a
$\mu$-curvature $K$ which is positive in $U$. Since the $U_k$ are disjoint, we can 
form a partition of unity of $M$ where only one of the functions $\rho_k$  $1$ on each of 
the sets $U_k$ and zero on the other sets. If $F_k$ are the Morse functions constructed
on $U_k$ as done in the proposition, then by parametrizing the partition of unity we
can get (using Sard, see in the more detailed form given by \cite{NicoalescuMorse}) 
that most of the gluing produces indeed global Morse functions even-so 
they must become wild in $M \setminus \bigcup_k U_k$. 
\end{proof}

\paragraph{}
This does by no means solve the Hopf conjecture as the curvature $K$ could be very 
negative in the remaining part of measure $\epsilon$ and render
$\int_M K \; dV$ negative. And we have currently no idea how to control the 
curvature there. Note for example that we can for any 2-surface produce a metric
that has positive curvature on most of the manifold: take a sphere and attach tiny 
handles to it. The handles can have arbitrary small area and still produce negative
Euler characteristic. 

\paragraph{}
We are not aware how to deform the Riemannian 
metric to achieve this for the Gauss-Bonnet-Chern integrand and Geroch's example shows that 
such an attempt is futile in general. 
Constructing more general measures allowed us to do that. 
Unfortunately,  we can not control the places where the different patches are glued with 
the partition of unity. The different patches would have to be aligned nicely in order to 
control the curvature there. It is not excluded however that one can make the argument 
global for certain classes of manifolds. 

\section{Discrete and Continuum}

\paragraph{}
Finally, we want to look at the problem whether the discrete and continuous results
can be linked in some way. Given a compact Riemannian $2d$-manifold $M$ of positive curvature.
Let $G=(V,E)$ be a finite simple $2d$-graph which triangulates $M$. Assume that $M$ is isometrically
embedded in an Euclidean space. We can assume that the vertices $v \in V$ are points in $M$ and so
in $E$. Let $\mu$ be the usual rotational homogeneous rotationally invariant measure on linear functions.
Almost all linear functions $f(x)=v \cdot x$ in $E$ with $|v|=1$ are both Morse functions 
on $M$ as well as locally injective function on $G$. Now, if $G$ is fine enough the sectional $\mu$ 
on $G$ as defined here is positive too. If the sign Hopf conjecture is true for graphs, then $\chi(G)>0$
but as $\chi(G)=\chi(M)$ if the triangulation is fine enough, we also have $\chi(M)>0$. 

\paragraph{}
This argument is convincing but it needs to be made more precise. There is previous work which does
similar things like \cite{CheegerMuellerSchrader}. Let us state this

\conjecture{
If the Hopf conjecture holds for finite simple $2d$-graphs $G$ with positive $\mu$ sectional curvature, then 
the Hopf conjecture holds for Riemannian $2d$-manifolds $M$ with positive $\mu$ sectional curvature. 
}

\paragraph{}
One can also look at the other direction. Assume the Hopf conjecture is true for manifold,
does it hold for graphs? This is less clear as one can imagine small graphs where 
things are different. The question might anyway be mute as both cases could be true. 

\paragraph{}
We expect that also the sphere theorems work in the same way as in the continuum. 

\conjecture{
Assume a simply connected Riemannian manifold $M$ allows
for a $\mu$ curvature that has positive sectional curvatures and
which satisfies the quarter pinching condition, then 
$M$ is diffeomorphic to a $d$-sphere. 	
}

In the discrete, one can ask

\conjecture{
Assume a finite simple graph $G$ has a $\mu$ curvature which leads to positive sectional
curvatures and which satisfies the quarter pinching condition, then $G$ is a $d$-sphere
(as a d-graph). 
}

\paragraph{}
This is expected to be difficult to prove, unlike the simple case, where
we ask every embedded wheel graph to have positive curvature (meaning that there are
4 or 5 boundary points in the wheel graph). In that case, the simple connectivity is 
not necessary and the graphs are very small \cite{SimpleSphereTheorem}.

\paragraph{}
The question relating the discrete and the continuum also is relevant when looking at 
metric properties. Given a measure $\mu$ one can by the Cauchy-Crofton formula also 
define a distance between two points. This can also be used in the discrete.
A minimizing path $\gamma$ connecting two points $a,b$ in a graph does in general not exist
uniquely. But given a locally injective function $f \in \Omega$ on the graph we can look at 
the random variable $N_{\gamma}(f)$ counting the number of edges in $\gamma$ where $f$
changes sign. This defines the pseudo metric
$$ d(x,y) = \inf_{\gamma(x,y)} E[N_{\gamma}]  \; , $$
where the infimum is taken over all curves connecting $x$ with $y$ with the understanding that
$d(x,y) = \infty$, if $x,y$ are not connected. 
The {\bf Kolmogorov quotient} of all equivalence classes of the equivalence relation 
given by $d(x,y)=0$ defines a metric space. For reasonable probability spaces $(\Omega,\mu)$
the Kolmogorov quotient is not needed. An example is if the graph $G$ is embedded in some
large dimensional $\mathbb{R}^n$. Taking the space of linear functions, one gets the
length of the polygonal path. By perturbing the measure $\mu$ one can achieve however
that geodesic connections are in general unique.

\bibliographystyle{plain}

\end{document}